\documentclass[12pt]{article}

\usepackage{amsmath}
\usepackage{amssymb, latexsym, a4}
\usepackage{graphicx}
\usepackage[all]{xy}
\usepackage{multicol}
\usepackage[usenames]{color}
 

%
%
\usepackage{mathrsfs}
\usepackage{hyperref}
\usepackage{cleveref}
\setlength\hoffset{-1in}
\setlength\oddsidemargin{2.5cm}
\setlength\textwidth{16cm}

\newtheorem{theorem}{Theorem}[section]
\newtheorem{lemma}[theorem]{Lemma}
\newtheorem{remark}[theorem]{Remark}
\newtheorem{corollary}[theorem]{Corollary}
\newtheorem{proposition}[theorem]{Proposition}

\newtheorem{definition}[theorem]{Definition}

\newenvironment{proof}{\trivlist\item[]\rm{\textbf{Proof.}\ }}{\endtrivlist}
 \makeatletter
      \def\@setcopyright{}
      \def\serieslogo@{}
      \makeatother




\author{Hesam Safa  \\ }


\title{On multipliers of pairs of Lie superalgebras}


\begin{document}
\maketitle


\noindent\textbf{Abstract.} In this article, we  study the notion of the Schur multiplier $\mathcal{M}(N,L)$ of a pair $(N,L)$
of Lie superalgebras and obtain some upper bounds concerning dimensions.
 Moreover, we  characterize the pairs  of finite dimensional (nilpotent) Lie superalgebras for which
 $\dim \mathcal{M}(N,L)= \frac{1}{2}\big{(}(m+n)^2+(n-m)\big{)}+\dim N\dim(L/N)-t$,
 for $t=0,1$, where $\dim N=(m|n)$. \\

\textbf{2010 MSC:} 17B30, 17B55.

\textbf{Key words:} Nilpotent Lie superalgebra, pair of Lie superalgebras, Schur multiplier.
\section{Introduction}

The story of  Lie superalgebras begins  in the late 1960s. The Russian physicist Stavraki \cite{sta} is by far the first one who used the term
{\it supersymmetry}, at which time it attracted little attention. Then
 V. G. Kac \cite{kac} started working on this new algebraic structure and  published his first results on the classification of Lie superalgebras in 1971.
 The notion of Lie superalgebras also appeared almost simultaneously in a paper of Berezin and G. I. Kac (Kats) \cite{ber} in 1970.
Developing the properties of Lie superalgebras has been of some interest in mathematics and theoretical physics for the last 40 years
(see \cite{b-l,lad,gom,k} for more information).

In 1996, Batten et al. \cite{b-m-s} discussed and studied
the concept of the  Schur multiplier of a Lie algebra, which
is analogous to the Schur multiplier of a group  introduced by  Schur \cite{s} in 1904.  Moneyhun \cite{m} proved
for an $m$-dimensional Lie algebra $L$ that $\dim \mathcal{M}(L)\leq\frac{1}{2}m(m-1)$, where $\mathcal{M}(L)$ denotes  the Schur multiplier of
$L$. Also in \cite{s-s-e}, Saeedi et al. generalized the Moneyhun's result to a pair of Lie algebras and proved  that
if $(N,L)$ is a pair of Lie algebras in which $N$ admits a complement  in $L$ and $\dim N=m$, then
 $\dim \mathcal{M}(N,L)\leq\frac{1}{2}m\big{(}m+2\dim(L/N)-1\big{)}$.
Recently  in \cite{n1,miao}, the notion of the Schur multiplier has been extended  to Lie superalgebras. In \cite{n1}, Nayak shows that
if $L$ is a Lie superalgebra of dimension $(m|n)$, then
  $\dim \mathcal{M}(L)\leq \frac{1}{2}\big{(}(m+n)^2 + (n-m)\big{)}$.

  In the present paper, following \cite{n1,s-s-e,miao} we study the Schur multiplier  of   a pair of Lie superalgebras and provide an upper bound for its dimension, which simultaneously extends all above bounds.
  We also characterize the pairs $(N,L)$ of finite dimensional (nilpotent) Lie superalgebras whose Schur multipliers can reach a maximum dimension (Theorem \ref{th2})
  or one less than that (Theorem \ref{th5}). As a consequence, we show that
   the  Heisenberg Lie algebra of dimension 3 is the only $(m|n)$-dimensional nilpotent  Lie superalgebra $L$ for which
    $\dim \mathcal{M}(L)= \frac{1}{2}\big{(}(m+n)^2+(n-m)\big{)}-1$
   (see also \cite{miao}).


\section{Preliminaries}

Throughout this paper, all (super)algebras are considered over an algebraically closed  field $\mathbb{F}$ of characteristic $\not=2,3$.
We first discuss some  terminologies   on Lie superalgebras  from \cite{lad,k,n1}.

Let $\mathbb{Z}_2=\{0,1\}$ be a field. A $\mathbb{Z}_2$-graded vector space  (or superspace) $V$ is
a direct sum of vector spaces $V_0$ and $V_1$, whose elements are called even and odd, respectively.
Non-zero elements of $V_0\cup V_1$ are said to be homogeneous. For a homogeneous element $v\in V_a$ with
$a\in\mathbb{Z}_2$,  $|v|=a$ is the
degree of $v$. In the sequel,  when the notation $|v|$ appears, it means that $v$ is a
homogeneous element.
A vector subspace $U$ of $V$ is called $\mathbb{Z}_2$-graded vector subspace (or sub-superspace), if $U=U_0\oplus U_1$ where
$U_0=U\cap V_0$ and $U_1=U\cap V_1$.

\begin{definition}\label{def0}\normalfont
A  Lie superalgebra is a superspace $L=L_0\oplus L_1$ equipped with a bilinear mapping
$[-,-] : L \times L \to L$, usually called the graded bracket  of $L$, satisfying
the following conditions:
\begin{itemize}
\item[$(i)$] $[L_a,L_b]\subseteq L_{a+b}$, for every $a,b\in \mathbb{Z}_2$,
\item[$(ii)$] $[x,y]=-(-1)^{|x||y|} [y,x]$,
\item[$(iii)$] $(-1)^{|x||z|}[x,[y,z]]+ (-1)^{|y||x|}[y,[z,x]]+(-1)^{|z||y|}[z,[x,y]]=0$,
\end{itemize}
for every $x,y,z\in L$.
The identities $(ii)$ and $(iii)$ are called  graded  antisymmetric property and  graded Jacobi identity, respectively.
\end{definition}

  Clearly, $(i)$ means that $|[x,y]|= |x|+|y|$  (modulo 2).
  Moreover, since $\frac{1}{2},\frac{1}{3}\in \mathbb{F}$, $(ii)$ implies
that $[x,x]=0$ for all $x\in L_0$, and $(iii)$ implies that $[x,[x,x]]=0$ for all $x\in L$. Hence the even part $L_0$ of a Lie superalgebra $L$ is actually a Lie algebra,
   which means that if  $L_1=0$, then $L$ becomes a usual Lie algebra. Also, the odd part $L_1$ is an $L_0$-module.
 If $L_0=0$, then $[x,y]=0$, for all $x,y\in L$ and hence $L$ is an abelian Lie superalgebra.
Also,  one can easily see that the
graded Jacobi identity may
be replaced by
\[[[x,y],z]=[x,[y,z]]-(-1)^{|x||y|} [y,[x,z]].\]

A sub-superspace $I$ of a Lie superalgebra $L$ is said to be a  sub-superalgebra (resp.  graded ideal), if $[I,I]\subseteq I$
(resp. $[I,L]\subseteq I$).
Now, let $N$ be a graded ideal of a Lie superalgebra  $L$. Then $(N,L)$ is called a pair of Lie superalgebras.
The center and  commutator of  the pair $(N,L)$ are defined as
$Z(N,L)=\{n\in N|\ [n,x]=0, \forall x\in L\}$ and
$[N,L]=\langle [n,x]|\ n\in N,x\in L\rangle$, repectively,
which are  graded ideals of $L$, contained in $N$. Clearly if $N=L$, then the above graded ideals coincide with the usual
center and commutator of $L$.
 Also, a Lie superalgebra $L$ is said to be  nilpotent, if $L^{c}=0$, where $L^1=L$ and
  $L^{i+1}=[L^i,L]$, $i\geq 1$.

Let $L$ and $K$ be two Lie superalgebras. A linear map $f:L\to K$ is called a  homomorphism of Lie superalgebras, if
$f(L_a)\subseteq K_a$ for every $a\in\mathbb{Z}_2$, and $f([x,y])=[f(x),f(y)]$ for every $x,y\in L$ (see \cite{n1,safa0} for more details).
Throughout this paper when a Lie superalgebra $L=L_0\oplus L_1$ is of dimension $m+n$,  in which $\dim L_0=m$ and
$\dim L_1=n$, we write $\dim L=(m|n)$.

  In the context of Lie algebras,  the Schur multiplier of a pair $(N,L)$, denoted by $\mathcal{M}(N,L)$, appears in the following natural exact sequence of Lie algebras
 \begin{eqnarray*}
H_3(L)\rightarrow H_3(L/N)\rightarrow \mathcal{M}(N,L)\rightarrow \mathcal{M}(L)\rightarrow \mathcal{M}(L/N)\\
\rightarrow L/[N,L] \rightarrow L/L^2\rightarrow L/(L^2+N)\rightarrow 0,\ \ \ \ \ \ \
\ \ \ \ \ \nonumber
\end{eqnarray*}
where  $H_3(-)$ denotes  the third homology of a Lie algebra (see \cite{s-s-e}).  In fact, this is similar to the definition of the Schur multiplier of a pair of groups given by Ellis \cite{ge}.
Also it is easy to  see that if the ideal $N$ admits  a complement in $L$, then
$\mathcal{M}(L)\cong \mathcal{M}(N,L)\oplus \mathcal{M}(L/N)$.
So in this paper, we consider pairs $(N,L)$ of Lie superalgebras in which $N$ possesses a complement in $L$.

Now, let $0\to R\to F\to L\to 0$ be a  free presentation of a Lie superalgebra $L$ and $N\cong S/R$ such that $S$ is a graded ideal of $F$.
 We define the  Schur multiplier of the pair $(N,L)$ as
\[\mathcal{M}(N,L)=\dfrac{R\cap [S,F]}{[R,F]},\]
which is an abelian Lie superalgebra,  independent of the choice of the
free presentation of $L$.
Clearly if $N=L$, then $\mathcal{M}(L,L)=\mathcal{M}(L)$ is
 the  Schur multiplier of a Lie superalgebra  $L$, given in \cite{miao,n1}. In fact, there is an isomorphism of supermodules
$\mathcal{M}(L)\cong H_2(L)$, where $H_2(L)$ is the second homology of $L$ (see \cite[Corollary 6.5]{lad}).
 This notion also extends the Schur multiplier of a pair of Lie algebras given in \cite{s-s-e,safa,safa2}.
 Moreover,  the Schur multiplier (resp. the Schur $\mathsf{Lie}$-multiplier) of a pair of Leibniz algebras is studied in \cite{b-s,h-e-s}
 (resp. \cite{safa1}).\\

Let $L$ and $M$ be two Lie superalgebras. By an action of $L$ on $M$, we mean an $\mathbb{F}$-bilinear map
$L\times M\to M$ given by $(l,m)\mapsto\ ^lm$ satisfying
\begin{itemize}
\item[(i)] $^lm\in M_{a+b}$, for every $l\in L_a$ and $m\in M_b$, $a,b\in\mathbb{Z}_2$ (even grading),
\item[(ii)] $^{[l,l']}m =\ ^l(^{l'}m)-(-1)^{|l||l'|}\ ^{l'}(^lm)$,
\item[(iii)] $^l[m,m']=[^lm,m']+(-1)^{|l||m|}[m, ^lm']$,
\end{itemize}
for all $l,l'\in L$ and $m,m'\in M$.
Clearly, if $L$ is a sub-superalgebra of some Lie superalgebra $P$ and $M$ is a graded ideal of $P$, then the Lie multiplication of $P$ induces an action of $L$ on $M$ by $^lm=[l,m]$. We also define the semidirect product of $M$ by $L$, denoted by
$M\rtimes L$, with underlying supermodule $M\oplus L$ and  the graded bracket given by
$$[(m,l),(m',l')]=\big{(}[m,m']+\ ^lm' -(-1)^{|m||l'|}\ ^{l'}m, [l,l']\big{)},$$
for $m,m'\in M$ and $l,l'\in L$ (see \cite{lad} for more information).\\

Now we define the concept of a cover for a pair of Lie superalgebras. The usual notion of central extensions and covers of a Lie superalgebra
is already given in  \cite{n1,miao}.
\begin{definition}\label{def1} \normalfont
Let $(N,L)$ be a pair of Lie superalgebras.  A {\it relative central extension} of  $(N,L)$  is a  homomorphism of Lie superalgebras $\sigma: M\to L$ together with an action of $L$ on $M$ satisfying  the following conditions:
\begin{itemize}
\item[(i)] $\sigma(M)=N$,
\item[(ii)] $\sigma(^lm)=[l,\sigma(m)]$, for all $l\in L$, $m\in M$,
\item[(iii)] $^{\sigma(m')}m=[m',m]$, for all $m,m'\in M$,
\item[(iv)]  $\ker \sigma\subseteq Z(M,L)$,
in which $Z(M,L)=\{m\in M|\ ^lm=0, \forall l\in L\}$.
\end{itemize}
In addition, the relative central extension $\sigma:M\rightarrow L$ is said to be a {\it cover} of  $(N,L)$, if
 $\mathcal{M}(N,L)\cong\ker \sigma\subseteq [M,L],$
 where $[M,L]=\langle ^lm\mid  m\in M, l\in L\rangle.$
\end{definition}

\begin{proposition}\label{prop21}
Every pair  of  Lie superalgebras has at least one cover.
\end{proposition}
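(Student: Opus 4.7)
The plan is to imitate the classical Lie-algebra construction of a cover, lifted to the $\mathbb{Z}_2$-graded setting. Fix a free presentation $0\to R\to F\to L\to 0$ together with a graded ideal $S$ of $F$ such that $N\cong S/R$, so that, by definition, $\mathcal{M}(N,L)=(R\cap [S,F])/[R,F]$.

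First I would build the candidate $M$. The superspace $R/[R,F]$ is abelian, and $(R\cap [S,F])/[R,F]$ is a graded sub-superspace of it. Choose a graded complement $K/[R,F]$, so that
\[
R/[R,F]=\bigl((R\cap [S,F])/[R,F]\bigr)\oplus \bigl(K/[R,F]\bigr),
\]
with $[R,F]\subseteq K\subseteq R$. Then $K$ is a graded ideal of $F$: indeed $[F,K]\subseteq [F,R]=[R,F]\subseteq K$. Set $M:=S/K$; the same inclusion shows $K$ is a graded ideal of $S$, so $M$ is a Lie superalgebra. Define $\sigma\colon M\to L$ by $\sigma(s+K)=s+R$, which is a well-defined homomorphism with $\sigma(M)=S/R=N$, giving condition~(i) of \Cref{def1}. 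Define an action of $L=F/R$ on $M$ by ${}^{f+R}(s+K):=[f,s]+K$. Well-definedness in both arguments follows from $[R,S]\subseteq [R,F]\subseteq K$ and $[F,K]\subseteq K$, and the action axioms are inherited from the graded Jacobi identity in $F$.

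Next I would verify conditions (ii)–(iv). Conditions (ii) and (iii) are direct translations: $\sigma({}^{f+R}(s+K))=[f,s]+R=[f+R,\sigma(s+K)]$, and ${}^{\sigma(s'+K)}(s+K)=[s',s]+K=[s'+K,s+K]$. For (iv), note that $\ker\sigma=R/K$; since $[F,R]=[R,F]\subseteq K$, the action of any $f+R\in L$ on any $r+K\in R/K$ is $[f,r]+K=0$, so $\ker\sigma\subseteq Z(M,L)$. Thus $\sigma$ is a relative central extension of $(N,L)$.

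Finally, to upgrade this extension to a cover I must check that $\ker\sigma\cong \mathcal{M}(N,L)$ and $\ker\sigma\subseteq [M,L]$. The direct-sum decomposition of $R/[R,F]$ gives an isomorphism
\[
\ker\sigma = R/K \cong (R\cap [S,F])/[R,F] = \mathcal{M}(N,L).
\]
For the containment, any $r\in R$ can, by the same decomposition, be written as $r=a+k+b$ with $a\in R\cap [S,F]$, $k\in K$ and $b\in [R,F]\subseteq K$; hence $r+K=a+K\in ([S,F]+K)/K=[M,L]$. The main obstacle in this proof is not algebraic manipulation but the choice of $K$: one must ensure that the chosen complement is a \emph{graded} sub-superspace (so that $M$ inherits the $\mathbb{Z}_2$-grading), which is possible because graded sub-superspaces of a super vector space always admit graded complements by splitting the even and odd parts separately. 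Once that is done, the verifications above proceed exactly as in the non-graded case.
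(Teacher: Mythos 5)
Your proof is correct and follows essentially the same route as the paper: choose a graded complement of $\mathcal{M}(N,L)$ in $R/[R,F]$ (your $K$ is the paper's $T$), set $M=S/K$ with the induced action of $L=F/R$, and verify the cover conditions. Your write-up is in fact somewhat more detailed than the paper's, spelling out the well-definedness checks and the explicit decomposition argument for $\ker\sigma\subseteq[M,L]$, which the paper handles via the chain of isomorphisms $R/T\cong(R\cap[S,F])/[R,F]\subseteq[S,F]/([S,F]\cap T)\cong([S,F]+T)/T$.
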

\begin{proof}
Let $(N,L)$ be a pair of Lie superalgebras and $0\to R\to F\stackrel{\pi}{\rightarrow}L\to 0$ be a free presentation of $L$ such that $N\cong S/R$, for some
graded ideal $S$ of $F$. Consider $T/[R,F]$ as a complement of $\mathcal{M}(N,L)$ in the abelian Lie superalgebra $R/[R,F]$. It is easy to see
that   $^{l}{(s+T)}= [f,s]+T$ where $\pi(f)=l$, is an action of $L$ on $S/T$.   Now, define
 $\sigma:S/T\to L$ by $s+T\mapsto \pi(s)$. We show that $\sigma$ is a cover of $(N,L)$.
 Clearly $\sigma(M)=N$.
 Also, $\sigma(^{l}{(s+T)})= \sigma([f,s]+T)=\pi([f,s])=[l,\pi(s)]=[l,\sigma(s+T)]$ and
 $^{\sigma(s'+T)}(s+T)=\ ^{\pi(s')}(s+T)=[s'+T,s+T]$. Moreover, one may easily check that $\mathcal{M}(N,L)\cong\ker\sigma\subseteq Z(M,L)$.
 Finally
 \begin{eqnarray*}
\ker\sigma&=&\frac{R}{T}\cong\frac{R\cap[S,F]}{[R,F]}\subseteq\frac{[S,F]}{[R,F]}=\frac{[S,F]}{[S,F]\cap T}\cong\frac{[S,F]+T}{T}\\
&=&\langle [f,s]+T|\ f\in F, s\in S\rangle=\langle ^{\pi(f)}{(s+T)}|\ f\in F, s\in S \rangle=[S/T,L].
\end{eqnarray*}
This completes the proof.
\end{proof}


\section{Main results}

The Lie algebra analogue of Schur's theorem \cite{s},  proved by
Moneyhun \cite{m}, states that if L is a Lie algebra such that $\dim(L/Z(L))=m$, then $\dim L^2\leq\frac{1}{2}m(m-1)$.
This upper bound has been recently generalized to Lie superalgebras. In \cite{n1}, Nayak shows that if $L$ is a Lie superalgebra with $\dim(L/Z(L))=(m|n)$,
then $\dim L^2\leq \frac{1}{2}\big{(}(m+n)^2+(n-m)\big{)}$. In the following theorem, we  extend this result to a pair of Lie superalgebras.

\begin{theorem}\label{th1}
Let $(N,L)$ be a pair of Lie superalgebras such that $L/N$ is finite dimensional and  $\dim(N/Z(N,L))=(m|n)$. Then
\[\dim [N,L]\leq \frac{1}{2}\big{(}(m+n)^2+(n-m)\big{)}+(m+n)\dim(L/N).\]
\end{theorem}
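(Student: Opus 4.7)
The plan is to produce an explicit spanning set for $[N,L]$ whose cardinality is bounded by the right-hand side, using the vanishing of brackets against $Z(N,L)$ together with graded antisymmetry. First I would decompose $N$ as a graded vector space $N = Z(N,L)\oplus U$ with $U$ a homogeneous complement carrying a basis $\{n_1,\ldots,n_m,n_{m+1},\ldots,n_{m+n}\}$ in which the first $m$ vectors are even and the last $n$ are odd; then I would pick a homogeneous set-theoretic lift $\{l_1,\ldots,l_{p+q}\}$ of any basis of $L/N$, where $(p|q):=\dim(L/N)$ (so $p+q=\dim(L/N)$). The fact that $Z(N,L)$ is graded, so that such a homogeneous complement $U$ exists, follows from decomposing any centralizing element into its even and odd components.

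For arbitrary $n\in N$ and $l\in L$, writing $n=z+\sum_ja_jn_j$ with $z\in Z(N,L)$ and $l=n'+\sum_ic_il_i$ with $n'\in N$, the identity $[Z(N,L),L]=0$ gives
\[
[n,l]=\sum_j a_j[n_j,n']+\sum_{j,i}a_jc_i[n_j,l_i].
\]
Reducing $n'$ modulo $Z(N,L)$ inside the first sum then shows that $[N,L]$ is spanned by
\[
\mathcal{S}=\bigl\{[n_j,n_k]:1\le j,k\le m+n\bigr\}\cup\bigl\{[n_j,l_i]:1\le j\le m+n,\ 1\le i\le p+q\bigr\}.
\]

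Finally I would apply the graded antisymmetry from Definition~\ref{def0}(ii) to prune the first family: even--even brackets are antisymmetric with $[n_j,n_j]=0$, contributing $\binom{m}{2}$ elements; odd--odd brackets are symmetric, contributing $\binom{n+1}{2}$; mixed-parity brackets are antisymmetric, contributing $mn$. Summing,
\[
\binom{m}{2}+mn+\binom{n+1}{2}=\tfrac12\bigl((m+n)^2+(n-m)\bigr),
\]
and the second family contributes an additional $(m+n)\dim(L/N)$ generators, which yields the stated bound. The only subtle point in the argument is that the cross term $[\sum_ja_jn_j,\,n']$ must be handled by a further reduction of $n'$ modulo $Z(N,L)$; this is legitimate precisely because $Z(N,L)$ is a graded subspace and the chosen basis $\{n_j\}$ is homogeneous, so no parity conflict arises and the $\mathbb{Z}_2$-grading of $[N,L]$ is respected throughout.
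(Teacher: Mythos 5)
Your proposal is correct and follows essentially the same route as the paper: both arguments choose a homogeneous basis of $N$ modulo $Z(N,L)$, adjoin $\dim(L/N)$ further homogeneous elements to cover $L$, and count brackets using graded (anti)symmetry, with the identical tally $\binom{m}{2}+\binom{n+1}{2}+mn+(m+n)\dim(L/N)$. The only cosmetic difference is that the paper extends a basis of $N/Z(N,L)$ to one of $L/Z(N,L)$, whereas you split off lifts of a basis of $L/N$ and reduce the remaining $N$-component modulo $Z(N,L)$ separately.
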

\begin{proof}
Let $\{\bar{x}_1,\ldots,\bar{x}_m,\bar{y}_{1},\ldots,\bar{y}_{n}\}$ be a basis of $\bar{N}=N/Z(N,L)$ in which
$\bar{x}_i\in\bar{N}_0$ ($1\leq i\leq m$) and $\bar{y}_{i}\in \bar{N}_1$ ($1\leq i\leq n$). Also let $\dim(L/N)=(p|q)$.  Since $\dim (L/Z(N,L))=(m+p|n+q)$, one may extend the above basis to
 $$\big{\{}\bar{x}_1,\ldots,\bar{x}_m,\bar{x}_{m+1},\ldots,\bar{x}_{m+p},\bar{y}_1,\ldots,\bar{y}_n,\bar{y}_{n+1},\ldots,\bar{y}_{n+q}\big{\}}$$
  for $\bar{L}=L/Z(N,L)$ where
$\bar{x}_j\in\bar{L}_0$ ($1\leq j\leq m+p$) and $\bar{y}_j\in \bar{L}_1$ ($1\leq j\leq n+q$). Then
\[\dim[N_0,L_0]\leq \# \Big{\{}[x_i,x_j]: 1\leq i\leq m\ and\ i<j\leq m+p \Big{\}}={m\choose 2}+mp,\]
\[\dim[N_1,L_1]\leq \#\Big{\{}[y_i,y_j]: 1\leq i\leq n\ and\ i\leq j\leq n+q \Big{\}}={n\choose 2}+n+nq,\]
\[\dim\Big{(}[N_0,N_1]+[N_0,L_1 - N_1]+[N_1,L_0- N_0]\Big{)}\leq mn+mq+np.\]
Therefore,
\begin{eqnarray*}
\dim [N,L]&\leq& {m\choose 2}+mp+{n\choose 2}+n+nq+mn+mq+np\\
&=& \frac{1}{2}\big{(}(m+n)^2+(n-m)\big{)}+(m+n)(p+q).\\
\end{eqnarray*}
\end{proof}

\begin{remark}\label{rem2}\normalfont
Clearly if $N=L$, then our upper bound coincides with the Nayak's one. Also if $(N,L)$ is a pair of Lie algebras, i.e.  $\dim(N/Z(N,L))=(m|0)$, then
the above theorem implies that $\dim [N,L]\leq \frac{1}{2}m\big{(}m+2\dim(L/N)-1\big{)}$ which is  given in \cite{s-s-e}.
\end{remark}

In the next result, we provide an upper bound on the dimension of the Schur multiplier of a pair of Lie superalgebras.
\begin{corollary}\label{coro3}
Let $(N,L)$ be a pair of finite dimensional Lie superalgebras such that  $\dim N=(m|n)$. Then
\[\dim \mathcal{M}(N,L)\leq \frac{1}{2}\big{(}(m+n)^2+(n-m)\big{)}+\dim N\dim(L/N)-\dim[N,L].\]
\end{corollary}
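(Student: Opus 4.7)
The plan is to lift everything to a cover of the pair $(N,L)$ and then mimic the counting in the proof of Theorem~\ref{th1}. By Proposition~\ref{prop21}, choose a cover $\sigma:M\to L$ of $(N,L)$; by Definition~\ref{def1}, $\sigma(M)=N$, $\ker\sigma\cong \mathcal{M}(N,L)$, and $\ker\sigma\subseteq Z(M,L)\cap [M,L]$. The homomorphism condition $\sigma({}^l m)=[l,\sigma(m)]$ shows that $\sigma$ restricts to a surjection $[M,L]\twoheadrightarrow[N,L]$ whose kernel is $\ker\sigma\cap[M,L]=\ker\sigma$. This yields the key identity
\[
\dim[M,L]=\dim\mathcal{M}(N,L)+\dim[N,L],
\]
so it suffices to show
\[
\dim[M,L]\le \tfrac{1}{2}\bigl((m+n)^2+(n-m)\bigr)+(m+n)\dim(L/N).
\]

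For this, I would fix a homogeneous basis $\{x_1,\dots,x_m,y_1,\dots,y_n\}$ of $N$ and extend it to a basis of $L$ by $\{u_1,\dots,u_p,v_1,\dots,v_q\}$, a homogeneous basis of a complement of $N$ in $L$, where $(p|q)=\dim(L/N)$. Choose homogeneous lifts $\tilde x_i,\tilde y_j\in M$ with $\sigma(\tilde x_i)=x_i$ and $\sigma(\tilde y_j)=y_j$. Because $\ker\sigma\subseteq Z(M,L)$, the action of $L$ on any $m\in M$ depends only on its image in $M/\ker\sigma\cong N$. Consequently $[M,L]$ is spanned by the vectors ${}^l\tilde x_i$ and ${}^l\tilde y_j$ as $l$ ranges over the chosen basis of $L$.

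When $l$ lies in the $N$-part of this basis, condition (iii) of Definition~\ref{def1} rewrites the action as a bracket in $M$ (for instance ${}^{x_k}\tilde x_i=[\tilde x_k,\tilde x_i]$), so graded antisymmetry inside $M$ reduces the even--even, mixed, and odd--odd contributions to at most $\binom{m}{2}$, $mn$, and $\binom{n}{2}+n$ independent generators, whose sum is $\tfrac{1}{2}\bigl((m+n)^2+(n-m)\bigr)$. When $l$ lies in the complementary basis $\{u_k,v_k\}$, no such relations are available, and these brackets contribute at most $mp+mq+np+nq=(m+n)(p+q)$ further generators. Adding the two contributions gives the required bound. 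The step that I expect to require the most care is this matching of relations: one must verify that condition (iii) of Definition~\ref{def1} supplies exactly the antisymmetries used in the proof of Theorem~\ref{th1}, and that the containment $\ker\sigma\subseteq[M,L]$ coming from the cover property is what forces $\sigma|_{[M,L]}$ to have kernel equal to all of $\ker\sigma$ (rather than a proper subspace) in the first reduction.
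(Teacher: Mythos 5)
Your proof is correct, but it follows a genuinely different route from the paper's. The paper works directly with a free presentation $0\to R\to F\to L\to 0$, $N\cong S/R$: from the isomorphism $[N,L]\cong \frac{[S,F]/[R,F]}{(R\cap [S,F])/[R,F]}$ it reads off the identity $\dim[N,L]+\dim\mathcal{M}(N,L)=\dim([S,F]/[R,F])$, and then bounds the right-hand side by applying Theorem~\ref{th1} as a black box to the genuine pair $\big(S/[R,F],\,F/[R,F]\big)$, whose relevant central quotient has dimension at most $(m|n)$ because $R/[R,F]$ is central. You instead pass through a cover $\sigma:M\to L$ (Proposition~\ref{prop21}) and extract the same identity $\dim[M,L]=\dim\mathcal{M}(N,L)+\dim[N,L]$ from the cover axioms --- here the containment $\ker\sigma\subseteq[M,L]$ is exactly what forces the kernel of $\sigma|_{[M,L]}$ to be all of $\ker\sigma$, as you correctly flag. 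Since $(M,L)$ is an action rather than a pair, you then redo the counting of Theorem~\ref{th1} by hand, using axiom (iii) of Definition~\ref{def1} to rewrite ${}^{\sigma(m')}m$ as the bracket $[m',m]$ in $M$ and harvest the graded (anti)symmetries; all the numerical contributions ($\binom{m}{2}$, $mn$, $\binom{n}{2}+n$, and $(m+n)(p+q)$) check out, and the elements of $\ker\sigma$ contribute nothing because they are central for the action. The net effect is the same bound. What your route costs is re-proving an ``action version'' of Theorem~\ref{th1}; you could have avoided this by forming $P=M\rtimes(L/N)$ and applying Theorem~\ref{th1} to the pair $(M,P)$, which is precisely the device the paper uses later in the proof of Theorem~\ref{th5}. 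What it buys is a statement purely in terms of covers, with no explicit reference to the free presentation --- though since Proposition~\ref{prop21} constructs the cover from that presentation (with $M=S/T$ and $[M,L]\cong[S,F]/[R,F]$), the two arguments are ultimately the same computation in different packaging.
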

\begin{proof}
Using the notation of Proposition \ref{prop21}, we have
\[[N,L]\cong\frac{[S,F]+R}{R}\cong\frac{[S,F]/[R,F]}{(R\cap [S,F])/[R,F]},\]
and hence
\begin{equation}\label{eq1}
\dim [N,L]+\dim\frac{R\cap [S,F]}{[R,F]}=\dim\frac{[S,F]}{[R,F]}.
\end{equation}
Also, since
\[\dim\frac{S/[R,F]}{Z\big{(}S/[R,F], F/[R,F]\big{)}}\leq \dim\frac{S/[R,F]}{R/[R,F]}=(m|n)\]
and $\frac{F/[R,F]}{S/[R,F]}\cong L/N$,
by Theorem \ref{th1} we get
\[\dim\frac{[S,F]}{[R,F]}\leq \frac{1}{2}\big{(}(m+n)^2+(n-m)\big{)}+(m+n)\dim(L/N).\]
Now equality (\ref{eq1}) completes the proof.
\end{proof}

\begin{remark}\label{rem4} \normalfont
One interesting problem in the theory of Lie algebras is the characterization of  Lie algebras using  $t(L)=\frac{1}{2}m(m-1)-\dim\mathcal{M}(L)$,
in which $L$ is an $m$-dimensional Lie algebra. The characterization  of nilpotent Lie algebras for $0\leq t(L)\leq 8$ has been studied in
\cite{b-m-s,h,h-s}. Similarly, Green's result \cite{grn}  yielded a lot of interest on the classification of finite $p$-groups by $t(G),$ investigated by several authors (see \cite{brk, lls, w,zh}).

Now, let $L$ be a Lie superalgebra.  A natural  question arises whether one could characterize  $(m|n)$-dimensional
 Lie superalgebras by
$t(L)=\frac{1}{2}\big{(}(m+n)^2+(n-m)\big{)}-\dim\mathcal{M}(L)$.
Lemma \ref{lem31} below is the answer for $t(L)=0$,
and in \cite{miao} this question is  answered  for $t(L)\leq2$. In \cite{safa3}, this is discussed for $n$-Lie superalgebras.
\end{remark}

In what follows, we deal with a similar problem   for the pair case.

\begin{lemma}\cite{m}\label{lem3}
If $L$ is an $m$-dimensional  Lie algebra, then
$\dim \mathcal{M}(L)\leq\frac{1}{2}m(m-1)$. Moreover the equality holds if and only if $L$ is abelian.
\end{lemma}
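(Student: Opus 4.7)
The plan is to exploit the Hopf-like description of $\mathcal{M}(L)$ via a free presentation, combined with an explicit count of brackets. Choose any basis $\{x_1,\ldots,x_m\}$ of $L$, lift it to generators of a free Lie algebra $F$, and write $0\to R\to F\stackrel{\pi}{\to} L\to 0$, so by definition $\mathcal{M}(L)=(R\cap F^2)/[R,F]$. Since $\mathcal{M}(L)\subseteq F^2/[R,F]$, it suffices to bound $\dim F^2/[R,F]$. In the quotient $F/[R,F]$, the image of $R$ is central, so modulo $[R,F]$ one has $[x_i,x_j]=-[x_j,x_i]$ and Jacobi reduces every iterated bracket; concretely, $F^2/[R,F]$ is spanned by the $\binom{m}{2}$ cosets $[x_i,x_j]+[R,F]$ with $i<j$. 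This immediately yields $\dim\mathcal{M}(L)\le\tfrac{1}{2}m(m-1)$.

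For the equality case, I would read off both inclusions in the chain
\[
\mathcal{M}(L)=\frac{R\cap F^2}{[R,F]}\subseteq\frac{F^2}{[R,F]}\subseteq\frac{F^{\,\cdot}}{[R,F]}\ \ \text{(rank}\le\tbinom{m}{2}\text{)}.
\]
If $\dim\mathcal{M}(L)=\binom{m}{2}$, then the first inclusion must be an equality, i.e.\ $F^2\subseteq R$, which forces $L^2=\pi(F^2)=0$; hence $L$ is abelian. Conversely, assume $L$ is abelian. Then taking $R=F^2$, we get $[R,F]=F^3$ and
\[
\mathcal{M}(L)=\frac{F^2\cap F^2}{F^3}=\frac{F^2}{F^3},
\]
and a standard fact about the free Lie algebra is that $F^2/F^3$ is a free module of rank $\binom{m}{2}$, with basis $\{[x_i,x_j]+F^3:i<j\}$. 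Equivalently, one can exhibit the abelian cover $L\wedge L\to L$ (the trivial map) whose kernel $L\wedge L$ has dimension $\binom{m}{2}$ and is isomorphic to $\mathcal{M}(L)$.

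The main obstacle is the independence claim: showing that the $\binom{m}{2}$ brackets $[x_i,x_j]+[R,F]$ are linearly independent in the abelian case (equivalently, that $F^2/F^3$ is free of rank $\binom{m}{2}$). Once one has this fact about free Lie algebras, the rest is just dimension counting along the two inclusions above.
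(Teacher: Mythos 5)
This lemma is quoted from Moneyhun's paper and is not proved in the present article, so there is no in-paper argument to compare against; judged on its own terms, your proof is correct. The bound and the two directions of the equality case all go through: $F^2/[R,F]$ is indeed spanned by the cosets $[x_i,x_j]+[R,F]$ with $i<j$; equality forces $R\cap F^2=F^2$, i.e.\ $F^2\subseteq R$ and hence $L^2=0$; and for abelian $L$ one has $R=F^2$, $[R,F]=F^3$, and $\mathcal{M}(L)\cong F^2/F^3$, which is free of rank $\binom{m}{2}$ by Witt's formula (or by the explicit Hall basis in degree $2$). One small imprecision worth fixing: the reduction of higher brackets to the $[x_i,x_j]$ is not really a consequence of the Jacobi identity. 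The correct mechanism is that $F=\mathrm{span}(x_1,\ldots,x_m)+R$ as a vector space and $R/[R,F]$ is central in $F/[R,F]$, so for any $u,v\in F$, writing $u=\sum a_ix_i+r$ and $v=\sum b_jx_j+s$ with $r,s\in R$ gives $[u,v]\equiv\sum a_ib_j[x_i,x_j]\pmod{[R,F]}$; this handles all of $F^2$ at once. Your route differs only cosmetically from Moneyhun's: he deduces $\dim F^2/[R,F]\le\binom{m}{2}$ from the Schur-type bound $\dim(L/Z(L))=m\Rightarrow\dim L^2\le\binom{m}{2}$ applied to the central extension $F/[R,F]$ (the bound the paper reproves in superalgebra form as Theorem 3.1), whereas you obtain the same spanning set directly; your version is more self-contained, his generalizes more readily to the relative and graded settings used later in the paper.
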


\begin{lemma}\cite{n1}\label{lem31}
If $L$ is a  Lie superalgebra of dimension $(m|n)$, then
$$\dim \mathcal{M}(L)\leq \frac{1}{2}\big{(}(m+n)^2+(n-m)\big{)}.$$ Moreover the equality holds if and only if $L$ is abelian.
\end{lemma}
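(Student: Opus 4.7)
The plan is to derive the inequality as a direct specialization of Corollary~\ref{coro3} and to read off the forward direction of the equality from that same chain; the content of the converse is essentially a computation of the Schur multiplier of an abelian Lie superalgebra.

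First I would apply Corollary \ref{coro3} with $N=L$. In that case $L/N=0$ and $[N,L]=L^2$, so the corollary collapses to
\[
\dim\mathcal{M}(L)\le\tfrac12\big((m+n)^2+(n-m)\big)-\dim L^2,
\]
which yields the desired inequality since $\dim L^2\ge 0$. This immediately handles one direction of the characterization: if $\dim\mathcal{M}(L)$ achieves the value $\tfrac12((m+n)^2+(n-m))$, then $\dim L^2\le 0$, whence $L^2=0$ and $L$ is abelian.

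The converse is the main computational step. Suppose $L$ is abelian with $\dim L_0=m$ and $\dim L_1=n$, and fix a free presentation $0\to R\to F\stackrel{\pi}{\to}L\to 0$. Since $L$ is abelian, every bracket in $L$ vanishes, forcing $F^2\subseteq R$; hence
\[
\mathcal{M}(L)=\frac{R\cap F^2}{[R,F]}=\frac{F^2}{[R,F]}.
\]
I would then fix a homogeneous lift $\{x_1,\dots,x_m\}\subset F_0$, $\{y_1,\dots,y_n\}\subset F_1$ of a basis of $L$ and argue that the classes of $[x_i,x_j]$ ($i<j$), $[x_i,y_k]$, and $[y_k,y_\ell]$ ($k\le \ell$) form a basis of $F^2/[R,F]$. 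The super antisymmetric property eliminates $[x_i,x_i]$ on the even side (since $\tfrac12\in\mathbb F$) but keeps the symmetric pairs $[y_k,y_k],[y_k,y_\ell]$ on the odd side, giving the count
\[
\binom{m}{2}+mn+\binom{n+1}{2}=\tfrac12\big((m+n)^2+(n-m)\big).
\]

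The step I expect to be the main obstacle is verifying that these generators are actually linearly independent modulo $[R,F]$: one must show no hidden relation arising from the free presentation collapses any of them. I would handle this either by an explicit construction of a cover (a stem cover of an abelian Lie superalgebra is built on the super exterior square) as in the proof of Proposition~\ref{prop21}, or equivalently by invoking $\mathcal{M}(L)\cong H_2(L)$ and observing that when $L$ is abelian every Chevalley--Eilenberg differential vanishes, so $H_2(L)$ coincides with the degree-two piece of the super exterior algebra on $L$. Either way the computation yields the claimed dimension, and combining with the forward direction finishes the characterization.
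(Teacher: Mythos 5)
The paper does not actually prove this lemma---it is quoted from Nayak \cite{n1}---so there is no internal proof to compare against; what can be judged is whether your argument is sound and consistent with the paper's logical order, and it is. Deducing the inequality and the direction ``equality $\Rightarrow$ abelian'' from Corollary~\ref{coro3} with $N=L$ is legitimate and non-circular, since that corollary depends only on Theorem~\ref{th1} and not on Lemma~\ref{lem31}. For the converse, your reduction to $\mathcal{M}(L)=F^2/[R,F]$ and the spanning count $\binom{m}{2}+mn+\binom{n+1}{2}=\frac{1}{2}\big((m+n)^2+(n-m)\big)$ are correct, and you rightly flag that the real content is the lower bound (linear independence of the classes $[x_i,x_j]$, $[x_i,y_k]$, $[y_k,y_\ell]$ modulo $[R,F]$). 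Of the two routes you propose, the homological one is the cleanest within the paper's framework: invoke $\mathcal{M}(L)\cong H_2(L)$ (cited from \cite{lad}) and note that for abelian $L$ all Chevalley--Eilenberg differentials vanish, so $H_2(L)$ is the super exterior square $\Lambda^2L_0\oplus(L_0\otimes L_1)\oplus S^2L_1$, of exactly the claimed dimension. Equivalently, the explicit central extension $E=L\oplus\Lambda^2L$ with bracket $[(x,a),(y,b)]=(0,x\wedge y)$ is a stem extension realizing the lower bound. Either completion is routine, so I see no genuine gap; just be aware that the independence step does require one of these external inputs and is not a consequence of the counting alone.
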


\begin{theorem}\label{th2}
Let $(N,L)$ be a pair of finite dimensional Lie superalgebras such that  $\dim N=(m|n)$. Then
\[\dim \mathcal{M}(N,L)\leq \frac{1}{2}\big{(}(m+n)^2+(n-m)\big{)}+\dim N \dim(L/N).\]
In particular, if $L$ is abelian then the equality holds, and if the equality holds then $N\subseteq Z(L)$.
\end{theorem}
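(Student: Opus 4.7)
The plan is to deduce all three assertions directly from Corollary~\ref{coro3} and Lemma~\ref{lem31}, without revisiting the cover/free-presentation machinery used in Proposition~\ref{prop21}.

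The upper bound is immediate: Corollary~\ref{coro3} states
\[\dim\mathcal{M}(N,L)\leq \frac{1}{2}\big((m+n)^{2}+(n-m)\big)+\dim N\dim(L/N)-\dim[N,L],\]
and the non-negative term $\dim[N,L]$ can simply be discarded.

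For the implication ``$L$ abelian implies equality,'' I would invoke the paper's standing assumption that $N$ admits a complement in $L$; this yields the direct-sum decomposition $\mathcal{M}(L)\cong \mathcal{M}(N,L)\oplus \mathcal{M}(L/N)$, so
\[\dim\mathcal{M}(N,L)=\dim\mathcal{M}(L)-\dim\mathcal{M}(L/N).\]
Since $L$ is abelian, so is $L/N$. Writing $\dim(L/N)=(p|q)$, we have $\dim L=(m+p\mid n+q)$, and Lemma~\ref{lem31} applies with equality to both $L$ and $L/N$. Subtracting the two resulting expressions and using the identity
\[(m+n+p+q)^{2}-(p+q)^{2}=(m+n)\bigl((m+n)+2(p+q)\bigr)\]
collapses the difference to $\frac{1}{2}\big((m+n)^{2}+(n-m)\big)+(m+n)(p+q)$, which is exactly the required value since $(m+n)(p+q)=\dim N\cdot\dim(L/N)$.

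For the converse, assume the equality in the statement holds. Comparing with Corollary~\ref{coro3} forces $\dim[N,L]=0$, hence $[N,L]=0$, equivalently $N\subseteq Z(L)$. I do not anticipate a real obstacle: apart from an elementary algebraic manipulation, the argument amounts to reading off information from Corollary~\ref{coro3} and the equality case of Lemma~\ref{lem31}. The only point requiring care is the use of the splitting $\mathcal{M}(L)\cong \mathcal{M}(N,L)\oplus \mathcal{M}(L/N)$, which is available precisely because the standing hypothesis guarantees a complement for $N$ in $L$.
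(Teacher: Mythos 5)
Your proposal is correct and follows essentially the same route as the paper: the inequality is read off from Corollary~\ref{coro3} by discarding $\dim[N,L]$, the converse direction forces $\dim[N,L]=0$ from the same corollary, and the abelian case uses the splitting $\mathcal{M}(L)\cong\mathcal{M}(N,L)\oplus\mathcal{M}(L/N)$ together with the equality case of Lemma~\ref{lem31} applied to $L$ and $L/N$. The only difference is that you write out the subtraction $(m+n+p+q)^{2}-(p+q)^{2}=(m+n)\bigl((m+n)+2(p+q)\bigr)$ explicitly, which the paper leaves implicit.
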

\begin{proof}
 Corollary \ref{coro3} yields  the above inequality. Clearly, if the equality holds, then Corollary \ref{coro3} implies that $N$ is central.
Now, suppose that $L$ is abelian and put $\dim (L/N)=(p|q)$. Then by Lemma \ref{lem31} we have
$\dim\mathcal{M}(L/N)=\frac{1}{2}\big{(}(p+q)^2+(q-p)\big{)}$ and
 $\dim\mathcal{M}(L)=\frac{1}{2}\big{(}(m+p+n+q)^2+(n+q-m-p)\big{)}$. Thus the isomorphism
 $\mathcal{M}(L)\cong\mathcal{M}(N,L)\oplus\mathcal{M}(L/N)$ implies that
 \begin{equation*}\label{eq0}
\dim \mathcal{M}(N,L)= \frac{1}{2}\big{(}(m+n)^2+(n-m)\big{)}+(m+n)\dim(L/N),
\end{equation*}
which completes the proof.
\end{proof}

Recall from \cite{b-l,b-m-s} that a finite dimensional Lie (super)algebra $L$ is called Heisenberg, if $L^2=Z(L)$ and $\dim L^2=1$.
A Heisenberg Lie algebra, denoted by $H(m)$, has dimension $2m+1$ with a basis $\{ x_1,\ldots,x_{2m},z\}$ and non-zero multiplications
$[x_i,x_{m+i}]=z$, for $1\leq i\leq m$.
 Heisenberg superalgebras consist of two types according to the
parity of the center.

(1)  Heisenberg superalgebra of even center and dimension $(2m+1|n)$:
\[\mathcal{H}(m,n)=\langle x_1,\ldots,x_{2m},z\rangle\oplus\langle  y_1,\ldots, y_n\rangle \ \ \ \ (m+n\geq 1)\]
with non-zero multiplications
$[x_i,x_{m+i}]=z=[y_j,y_j]$, for $1\leq i\leq m$ and  $1\leq j\leq n$.

(2) Heisenberg superalgebra of odd center and dimension $(n|n+1)$:
\[\mathcal{H}(n)=\langle x_1,\ldots,x_n\rangle\oplus\langle y_1,\ldots,y_n,z\rangle \ \ \ \ (n\geq 1).\]
with non-zero multiplications $[x_i,y_i]=z$, for $1\leq i\leq n$.\\

The following lemmas are needed for proving the next results.

\begin{lemma}\cite[Theorem 3]{b-m-s}\label{lem4}
Let $L$ be an $m$-dimensional nilpotent Lie algebra. Then
$\dim \mathcal{M}(L)=\frac{1}{2}m(m-1)-1$ if and only if $L\cong H(1)$.
\end{lemma}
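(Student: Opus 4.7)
The plan is to prove the two implications separately, using the Hopf formula $\mathcal{M}(L) = (R \cap F^2)/[R, F]$ from a free presentation $0 \to R \to F \to L \to 0$, Lemma \ref{lem3}, and the $5$-term exact sequence of Lie homology.

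For the sufficiency, I would present $H(1) = F/R$ with $F$ free on two generators $e_1, e_2$ and $R$ the ideal generated by $[e_1, [e_1, e_2]]$ and $[e_2, [e_1, e_2]]$. Since $R \subseteq F^3$ we have $R \cap F^2 = R$ and $[R, F] \subseteq F^4$; a Hall-basis inspection in weights at most $4$ shows the two defining commutators project to a basis of $R/[R, F]$, giving $\dim \mathcal{M}(H(1)) = 2 = \binom{3}{2} - 1$.

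For the necessity, assume $L$ is nilpotent of dimension $m$ with $\dim \mathcal{M}(L) = \binom{m}{2} - 1$. Lemma \ref{lem3} rules out $L$ abelian, so $L^2 \neq 0$ and nilpotency gives a nonzero $z \in Z(L) \cap L^2$. Set $Q = L/\langle z\rangle$; the $5$-term exact sequence of the central extension $0 \to \langle z\rangle \to L \to Q \to 0$ reads
\[\mathcal{M}(L) \xrightarrow{\alpha} \mathcal{M}(Q) \to \langle z\rangle \to L/L^2 \to Q/Q^2 \to 0.\]
Because $z \in L^2$ the map $\langle z\rangle \to L/L^2$ vanishes, so $\mathcal{M}(Q) \to \langle z\rangle$ is surjective and $\dim \Ima\alpha = \dim \mathcal{M}(Q) - 1$. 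The Lyndon--Hochschild--Serre spectral sequence for this central extension bounds $\dim \ker \alpha \leq \dim(Q^{\mathrm{ab}} \otimes \langle z\rangle) = \dim L/L^2$. Combined with Lemma \ref{lem3} applied to $Q$, these give
\[\tbinom{m}{2} - 1 = \dim \mathcal{M}(L) \leq (m - \dim L^2) + \tbinom{m-1}{2} - 1,\]
which forces $\dim L^2 = 1$ and saturates both estimates; in particular $\dim \mathcal{M}(Q) = \binom{m-1}{2}$, so by Lemma \ref{lem3} $Q$ is abelian, whence $L^2 = \langle z\rangle \subseteq Z(L)$ and $L$ is $2$-step nilpotent with $1$-dimensional commutator. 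Such a Lie algebra decomposes as $L \cong V \oplus H(k)$ with $V$ abelian, and the product formula $\mathcal{M}(V \oplus H(k)) \cong \mathcal{M}(V) \oplus \mathcal{M}(H(k)) \oplus V \otimes H(k)^{\mathrm{ab}}$, together with $\dim \mathcal{M}(H(k)) = 2k^2 - k - 1$ for $k \geq 2$ and $\dim \mathcal{M}(H(1)) = 2$, forces $V = 0$ and $k = 1$.

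The main obstacle is the inflation-type bound $\dim \ker \alpha \leq \dim L/L^2$, which is not formal from the $5$-term sequence. It comes from the LHS spectral sequence: $\mathcal{M}(L)$ carries a filtration with successive quotients $E^\infty_{2,0} \subseteq \mathcal{M}(Q)$, $E^\infty_{1,1} \subseteq Q^{\mathrm{ab}} \otimes \langle z\rangle$, and $E^\infty_{0,2} \subseteq \Lambda^2 \langle z\rangle = 0$, so $\ker \alpha$ embeds in $Q^{\mathrm{ab}} \otimes \langle z\rangle$, of dimension $\dim L/L^2$. Granting this, the rest is arithmetic plus the standard structure theorem for $2$-step nilpotent Lie algebras with $1$-dimensional commutator.
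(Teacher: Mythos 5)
The paper does not prove this statement at all: it is quoted verbatim from Batten--Moneyhun--Stitzinger \cite[Theorem 3]{b-m-s} and used as a black box, so there is no internal proof to compare against. Your reconstruction is essentially sound. The sufficiency computation is fine: the ideal generated by the two weight-$3$ Hall elements is exactly $F^3$, so $\mathcal{M}(H(1))=F^3/F^4$ has dimension $2=\binom{3}{2}-1$. For necessity, your chain of inequalities is correct: with $z\in Z(L)\cap L^2$ and $Q=L/\langle z\rangle$, the surjectivity of $\mathcal{M}(Q)\to\langle z\rangle$ plus the bound $\dim\ker\alpha\le\dim(Q^{\mathrm{ab}}\otimes\langle z\rangle)$ gives $\binom{m}{2}-1\le (m-\dim L^2)+\binom{m-1}{2}-1$, forcing $\dim L^2=1$ and $L^2=\langle z\rangle\subseteq Z(L)$; the nondegenerate alternating form on $L/Z(L)$ then gives $L\cong H(k)\oplus V$, and the dimension count you perform (the deficit is $4k+2a$ for $k\ge 2$, and $2a$ for $k=1$) indeed forces $k=1$, $V=0$. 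Two remarks. First, the inflation-type bound you flag as the main obstacle does not really need the Hochschild--Serre spectral sequence: for a central ideal $N$ there is the elementary Ganea exact sequence $N\otimes L/L^2\to\mathcal{M}(L)\to\mathcal{M}(L/N)\to N\cap L^2\to 0$, which yields the same estimate and is closer to how \cite{b-m-s} actually argue (they bound $\dim\mathcal{M}(L)$ in terms of $\dim L$ and $\dim L^2$ first, then classify the case $\dim L^2=1$). Second, your argument imports two nontrivial results from the same source it is proving a theorem of, namely the direct-sum formula $\mathcal{M}(A\oplus B)\cong\mathcal{M}(A)\oplus\mathcal{M}(B)\oplus(A^{\mathrm{ab}}\otimes B^{\mathrm{ab}})$ and the values $\dim\mathcal{M}(H(k))=2k^2-k-1$ for $k\ge 2$; that is not circular, but it should be acknowledged that these are theorems requiring their own (free-presentation) proofs rather than folklore.
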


\begin{lemma}\cite[Theorem 4.3]{n1}\label{lem5}
For $\mathcal{H}(m,n)$ we have
\begin{eqnarray*}
\dim\mathcal{M}(\mathcal{H}(m,n))=\left\{\begin{array}{ll}
2m^2 -m-1+2mn+\frac{1}{2}n(n+1) &if \ m+n\geq2,\\
0 &if \ m=0, n=1,\\
2 &if \ m=1, n=0.
\end{array} \right.
\end{eqnarray*}
\end{lemma}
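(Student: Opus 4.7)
The plan is to identify $\mathcal{M}(L)$ with $H_2(L;\mathbb F)$ and compute the homology directly from the super Chevalley--Eilenberg complex, handling the two exceptional cases separately.

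\textbf{Step 1 (exceptional cases).} For $\mathcal H(0,1)=\langle z,y\rangle$ with $[y,y]=z$, the free Lie superalgebra on one odd generator is already $(1|1)$-dimensional (the graded Jacobi identity forces $[y,[y,y]]=0$ in characteristic $\neq 3$), so it is isomorphic to $\mathcal H(0,1)$, whence $R=0$ and $\mathcal M=0$. For $\mathcal H(1,0)\cong H(1)$, the value $2=\tfrac{1}{2}\cdot 3\cdot 2 -1$ is immediate from Lemmas~\ref{lem3} and~\ref{lem4}.

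\textbf{Step 2 (general setup).} For $m+n\ge 2$, I would use $\mathcal M(L)\cong H_2(L)$ from \cite[Cor. 6.5]{lad} and the super Koszul-type complex
$$C_2=\Lambda^2 L_0\oplus (L_0\otimes L_1)\oplus S^2 L_1,\qquad C_1=L,$$
with super-boundaries $d_2, d_3$. Since $\Ima d_2 = L^2 = \langle z\rangle$,
$$\dim\mathcal M(L)=\dim C_2-\dim L^2-\dim(\Ima d_3).$$
A direct count gives
$$\dim C_2=\binom{2m+1}{2}+(2m+1)n+\binom{n+1}{2}=2m^2+m+2mn+n+\tfrac{n(n+1)}{2},$$
and $\dim L^2=1$.

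\textbf{Step 3 (the crux).} Because $\mathcal H(m,n)$ is two-step nilpotent, every iterated length-three bracket vanishes, so $\Ima d_3$ lies in the span of $\{z\wedge x_k:1\le k\le 2m\}\cup\{z\otimes y_j:1\le j\le n\}\subseteq C_2$, a subspace of dimension $2m+n$. I would show $d_3$ surjects onto this subspace: for each $x_k$, a triple $x_i\wedge x_{m+i}\wedge x_k$ with $k\notin\{i,m+i\}$ has $d_3$-image equal to $\pm z\wedge x_k$; when no such triple exists (for small $m$), exchange it for a mixed-parity triple such as $y_1\cdot y_1\otimes x_k$. For each $y_j$, use a triple $x_i\wedge x_{m+i}\otimes y_j$ (or a symmetric-part variant involving $S^2 L_1$). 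This yields $\dim(\Ima d_3)=2m+n$, and substituting into Step~2 produces the claim.

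\textbf{Main obstacle.} The delicate point is the surjectivity in Step~3: one must track the super sign conventions in $d_3$, especially across the $S^2 L_1$ summand, and handle the boundary cases ($m\le 1$ or $n\le 1$) where the natural triple is unavailable and must be replaced by a mixed-parity one. A careful case analysis is needed so that no basis vector of the $(2m+n)$-dimensional target is overlooked.
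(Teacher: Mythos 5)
The paper does not actually prove this lemma: it is imported wholesale from Nayak \cite[Theorem 4.3]{n1}, so there is no internal argument to compare against. Your proposal is therefore a genuinely different, self-contained route, and its arithmetic checks out. With $\dim L_0=2m+1$, $\dim L_1=n$ one indeed has $\dim C_2=\binom{2m+1}{2}+(2m+1)n+\binom{n+1}{2}$, $\Ima d_2=L^2=\langle z\rangle$, and since every bracket in $\mathcal{H}(m,n)$ lies in $\langle z\rangle$ while $z\wedge z=0$, the image of $d_3$ sits inside $\mathrm{span}\{z\wedge x_k\}\oplus\mathrm{span}\{z\otimes y_j\}$ of dimension $2m+n$; subtracting gives exactly $2m^2-m-1+2mn+\tfrac{1}{2}n(n+1)$. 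Your surjectivity sketch survives the boundary cases you flag: for $m\ge 2$ the triple $x_i\wedge x_{m+i}\wedge x_k$ with $k\notin\{i,m+i\}$ always exists; for $m=1$ the hypothesis $m+n\ge2$ forces $n\ge1$ so $x_k\otimes y_1y_1$ is available; for $m=0$, $n\ge2$ the triple $y_{j'}y_{j'}y_j$ ($j'\ne j$) hits $z\otimes y_j$; and for each $y_j$ with $m\ge1$ the triple $x_1\wedge x_{m+1}\otimes y_j$ works. The exceptional value for $\mathcal{H}(1,0)$ is explained by the fact that the only element of $C_3$ is $x_1\wedge x_2\wedge z$, whose boundary is $-z\wedge z=0$, so $\Ima d_3=0$ there. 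Two small caveats: your whole computation rests on the identification $\mathcal{M}(L)\cong H_2(L)$ from \cite[Corollary 6.5]{lad} together with the specific super Chevalley--Eilenberg model $C_p=\bigoplus_{i+j=p}\Lambda^iL_0\otimes S^jL_1$, which you should cite explicitly rather than assume; and Step 3 as written is a plan (``I would show'') rather than a verification, though the case analysis it calls for is exactly the one above and closes without surprises. What your approach buys is independence from \cite{n1}; what the paper's citation buys is brevity.
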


\begin{lemma}\cite[Proposition 4.5]{miao}\label{lem51}
For $\mathcal{H}(n)$ we have
\begin{eqnarray*}
\dim\mathcal{M}(\mathcal{H}(n))=\left\{\begin{array}{ll}
2n^2 -1 &if \ n\geq2,\\
2 &if \  n=1.
\end{array} \right.
\end{eqnarray*}
\end{lemma}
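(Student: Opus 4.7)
The plan is to compute $\mathcal{M}(\mathcal{H}(n))$ via the formula $\mathcal{M}(L)=(R\cap F^2)/[R,F]$ afforded by Proposition \ref{prop21}. Take $F$ to be the free Lie superalgebra on $n$ even generators $\tilde x_1,\dots,\tilde x_n$ and $n$ odd generators $\tilde y_1,\dots,\tilde y_n$, and let $\pi\colon F\to\mathcal{H}(n)$ be the surjection sending $\tilde x_i\mapsto x_i$, $\tilde y_i\mapsto y_i$, so that $z$ is realised as the image of $[\tilde x_1,\tilde y_1]$. Because $\mathcal{H}(n)$ is nilpotent of class two, $F^3\subseteq R=\ker\pi$, and $R\cap F^2$ splits into a degree-$2$ piece together with $F^3$, while $[R,F]$ lives entirely in degree $\ge 3$.

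In degree $2$, the kernel is spanned by the linearly independent elements $[\tilde x_i,\tilde x_j]$ for $i<j$, $[\tilde y_i,\tilde y_j]$ for $i\le j$, $[\tilde x_i,\tilde y_j]$ for $i\ne j$, and $[\tilde x_i,\tilde y_i]-[\tilde x_1,\tilde y_1]$ for $i\ge 2$; a direct count yields
\[
\binom{n}{2}+\binom{n+1}{2}+n(n-1)+(n-1)=2n^2-1.
\]
Since $[R,F]$ contributes nothing in degree $2$, these descend to linearly independent classes in $\mathcal{M}(\mathcal{H}(n))$.

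The remaining step is to show that the degree-$k$ part of $F$ lies inside $[R,F]$ for every $k\ge 3$. For $k=3$ each basis element has the form $[\tilde u,[\tilde v,\tilde w]]$, and the graded Jacobi identity combined with the degree-$2$ relations above rewrites it as an element of $[R,F]$. The one delicate class is $[\tilde x_1,[\tilde x_1,\tilde y_1]]$: when $n\ge 2$ it can be pivoted via $[\tilde x_2,\tilde y_2]-[\tilde x_1,\tilde y_1]\in R$ into $[\tilde x_1,[\tilde x_2,\tilde y_2]]$, which in turn Jacobi-expands into brackets containing the $R$-elements $[\tilde x_1,\tilde x_2]$ and $[\tilde x_1,\tilde y_2]$, and so lies in $[R,F]$. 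Degrees $k\ge 4$ then follow by a routine induction on the lower central filtration. The same pivot is not available when $n=1$: the class $[\tilde x,[\tilde x,\tilde y]]$ survives, and together with $[\tilde y,\tilde y]$ it yields $\dim\mathcal{M}(\mathcal{H}(1))=2$.

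The main obstacle is the exhaustive degree-$3$ case analysis, which must be carried out with attention to the quadratic signs of the super-Jacobi identity and to the identity $[x,[x,x]]=0$ for odd $x$; a convenient choice of basis for the degree-$3$ part of $F$ makes the enumeration manageable. As a consistency check, the answer $2n^2-1$ comfortably respects the general upper bound $2n^2+2n+1$ that Lemma \ref{lem31} supplies for an $(n|n+1)$-dimensional Lie superalgebra.
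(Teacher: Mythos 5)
The paper offers no proof of this statement: it is imported verbatim from \cite[Proposition 4.5]{miao}, so there is no internal argument to compare yours against. Judged on its own, your computation via the Hopf-type formula $(R\cap F^2)/[R,F]$ is essentially sound. The degree-two count is right: the homogeneous degree-two part of the free Lie superalgebra on $(n|n)$ generators has dimension $\binom{n}{2}+\binom{n+1}{2}+n^2=2n^2$, it surjects onto $\langle z\rangle$, and since $[R,F]\subseteq F^3$ the $(2n^2-1)$-dimensional kernel injects into $\mathcal{M}(\mathcal{H}(n))$. The reduction of degree three to the single problematic class $[\tilde x_1,[\tilde x_1,\tilde y_1]]$, and the pivot through $[\tilde x_2,\tilde y_2]-[\tilde x_1,\tilde y_1]\in R$ when $n\ge 2$, are exactly the right moves (note that $[\tilde y_1,[\tilde x_1,\tilde y_1]]$ needs no pivot at all: super-Jacobi gives $2[[\tilde x_1,\tilde y_1],\tilde y_1]=[\tilde x_1,[\tilde y_1,\tilde y_1]]\in[R,F]$ in characteristic $\ne 2$).

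Two places deserve tightening. First, degrees $k\ge 4$ require no induction: since $\mathcal{H}(n)$ is nilpotent of class two, $F^3\subseteq R$, hence $F^4=[F^3,F]\subseteq[R,F]$ outright, and your degree-three analysis then closes the argument. Second, and more substantively, for $n=1$ you assert that the class of $[\tilde x,[\tilde x,\tilde y]]$ ``survives'' without justification; this is the one step where something could go wrong. It does survive, but you should say why: $[R,F]$ meets degree three only in $[\langle[\tilde y,\tilde y]\rangle,F]$, whose elements have multidegree $(1,2)$ or $(0,3)$ in the generators $(\tilde x,\tilde y)$, whereas $[\tilde x,[\tilde x,\tilde y]]$ has multidegree $(2,1)$ and is nonzero in the free Lie superalgebra (check it in the free associative envelope: $\tilde x\tilde x\tilde y-2\tilde x\tilde y\tilde x+\tilde y\tilde x\tilde x\ne 0$). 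The multigrading of the free Lie superalgebra then gives the required linear independence, and the total $1+1=2$ for $n=1$ follows. With these two points filled in, the proof is complete and is a perfectly reasonable self-contained substitute for the citation.
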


Now, we are ready to prove the following main theorem.
\begin{theorem}\label{th5}
Let $(N,L)$ be a pair of finite dimensional nilpotent Lie superalgebras with  $\dim N=(m|n)$. If
\begin{equation}\label{eq2}
\dim \mathcal{M}(N,L)= \frac{1}{2}\big{(}(m+n)^2+(n-m)\big{)}+\dim N\dim(L/N)-1,
\end{equation}
then $L$ is a non-abelian Lie superalgebra in which one of the following holds:
\begin{itemize}
\item[$(i)$] $[N,L]=0$,
\item[$(ii)$] $\dim [N,L]=1$  and $[N,L]=Z(N,L)$.
\end{itemize}
\end{theorem}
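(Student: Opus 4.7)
The plan is to combine Corollary \ref{coro3} with a careful analysis of its equality case.

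Applying Corollary \ref{coro3} to the hypothesis immediately gives $\dim[N,L] \leq 1$. Moreover, if $L$ were abelian, Theorem \ref{th2} would force $\dim\mathcal{M}(N,L)$ to attain the full upper bound $\frac{1}{2}((m+n)^2+(n-m))+\dim N\dim(L/N)$, contradicting the hypothesis. Hence $L$ is non-abelian. If $\dim[N,L] = 0$ we are in case $(i)$, so assume $\dim[N,L] = 1$. Since $L$ is nilpotent, the subspace $[[N,L], L]$ of the one-dimensional graded ideal $[N,L]$ cannot equal $[N,L]$ (else iteration would contradict $L^c = 0$), so $[[N,L], L] = 0$ and $[N,L] \subseteq Z(L) \cap N = Z(N,L)$.

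It remains to establish the reverse inclusion $Z(N,L) \subseteq [N,L]$. With $\dim[N,L] = 1$, the inequality of Corollary \ref{coro3} is in fact an equality, which forces equality at every step of its proof. Using the notation $0 \to R \to F \to L \to 0$ for a free presentation and $N \cong S/R$, and setting $Q := S/[R,F]$, $\bar{F} := F/[R,F]$, this yields two consequences: (A) $Z(Q,\bar{F}) = R/[R,F]$, and (B) the brackets listed in the proof of Theorem \ref{th1} applied to $(Q,\bar{F})$ form a basis of $[S,F]/[R,F]$. Condition (A) holds because the bound of Theorem \ref{th1} is strictly monotone in the center-quotient dimensions whenever $\dim L \geq 1$, so any strict enlargement of $Z(Q,\bar{F})$ beyond $R/[R,F]$ would strictly lower the bound, contradicting the assumed equality.

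The main obstacle is to convert (A) and (B) into $Z(N,L) \subseteq [N,L]$. Given $z \in Z(N,L)$ with a homogeneous lift $\hat{z} \in S$, one has $[\hat{z}, F] \subseteq R$, but (A) only characterizes the \emph{stronger} condition $[\hat{z}, F] \subseteq [R,F]$. The proposed route is by contradiction: assume $z \notin [N,L]$, i.e., $\hat{z} \notin [S,F] + R$, and incorporate $\hat{z}$ into the chosen lifts of a basis of $N$ used in the proof of Theorem \ref{th1} for $(Q,\bar{F})$. Then all the brackets of (B) that involve $\hat{z}$ project to zero in $L$ and so lie in $\mathcal{M}(N,L) = ([S,F] \cap R)/[R,F]$. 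A dimension count, using that $\mathcal{M}(N,L)$ has dimension one less than $[S,F]/[R,F]$ and that the full collection in (B) must span the latter, produces the desired contradiction. This accounting step is the central technical difficulty; an alternative route is to work inside a cover $\sigma : M \to L$ (Proposition \ref{prop21}) and exploit the tight case of Theorem \ref{th1} applied to the semidirect product $M \rtimes L$, where $\ker\sigma \subseteq [M,L] \cap Z(M,L)$ and $\dim[M,L] = \dim[N,L] + \dim\mathcal{M}(N,L)$ together force $\dim Z(N,L) \leq 1$.
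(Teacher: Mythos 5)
The routine parts of your argument are correct and coincide with the paper's: Theorem \ref{th2} rules out $L$ abelian, Corollary \ref{coro3} gives $\dim[N,L]\le 1$, and nilpotency forces $[[N,L],L]\subsetneq[N,L]$, hence $[N,L]\subseteq Z(N,L)$ when $\dim[N,L]=1$. The gap is the remaining step, which is the only substantial content of the theorem: proving $Z(N,L)\subseteq[N,L]$, equivalently $\dim Z(N,L)=1$. Neither of your two proposed routes is actually carried out, and you concede as much (``this accounting step is the central technical difficulty''). Your first route stalls exactly where you say it does: equality in Corollary \ref{coro3} forces $Z(S/[R,F],F/[R,F])=R/[R,F]$, but an element $z\in Z(N,L)$ lifts only to $\hat z\in S$ with $[\hat z,F]\subseteq R$, not $[\hat z,F]\subseteq[R,F]$, so condition (A) is silent about $Z(N,L)$. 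The promised dimension count from (B) is never performed, and it does not obviously close: putting $\hat z$ into the basis shows that $\dim L-1$ of the distinguished independent brackets land in $\mathcal{M}(N,L)$, which has codimension one in $[S,F]/[R,F]$; no contradiction follows from that alone. (Also, the bound of Theorem \ref{th1} is \emph{not} strictly monotone in the even coordinate when $\dim L\le 2$ --- dropping $m$ by one lowers the bound by $\dim L-1$ --- which is why the paper must separately dispose of the low-dimensional cases $\dim L=(2|0),(0|2),(1|1)$.)

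Your second route is the paper's actual strategy, but the sentence ``$\ker\sigma\subseteq[M,L]\cap Z(M,L)$ and $\dim[M,L]=\dim[N,L]+\dim\mathcal{M}(N,L)$ together force $\dim Z(N,L)\le 1$'' is precisely the assertion that has to be proved, not a consequence you can cite. The paper's proof of it requires: (a) separate treatment of $\ker\sigma\subsetneq Z(M,L)$, where $\dim(M/Z(M,P))<(m|n)$ makes the Theorem \ref{th1} bound drop and forces $\dim L\le 2$, followed by an inspection of the few low-dimensional nilpotent superalgebras; (b) separate treatment of $[M,P]=Z(M,P)$, which would put $N$ inside $Z(L)$; and (c) in the main case, a contradiction argument that takes two independent elements $x+Z(M,P)$, $y+Z(M,P)$ of $Z(M/Z(M,P),P/Z(M,P))$, applies Theorem \ref{th1} to the pair $(M/[x,P],P/[x,P])$ to compute $\dim[x,P]=\dim L-1$, deduces $C_P(x)=[M,P]=C_P(y)$, hence $y\in C_P(x)$, contradicting $\dim\bigl(C_P(x)/Z(M,P)\bigr)=1$. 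None of this machinery appears in your proposal, so the key conclusion $[N,L]=Z(N,L)$ remains unproved.
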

\begin{proof}
Clearly Theorem \ref{th2} and  equality (\ref{eq2})  imply that $L$ is non-abelian.
Also Corollary \ref{coro3} and  equality (\ref{eq2})  imply that $\dim[N,L]\leq 1$. Suppose that $[N,L]\not=0$.
Since $L$ is nilpotent, we have $\dim [[N,L],L]\lneqq\dim[N,L]=1$ and hence $[N,L]\subseteq Z(N,L)$.
 We only need to show that $\dim Z(N,L)=1$.

Considering Proposition \ref{prop21}, let $\sigma:M\to L$ be a cover of the pair $(N,L)$.
Put $P=M\rtimes \frac{L}{N}$, in which the action of $L/N$ on $M$ is induced by the one of $L$ on $M$.
By construction we have $P/M\cong L/N$, $Z(M,P)=Z(M,L)$, $[M,L]= [M,P]$, $P/Z(M,P)\cong L$ and $M/Z(M,P)\cong N$.
Therefore,
\[\dim\frac{M}{Z(M,P)}=\dim\frac{M}{Z(M,L)}\leq \dim\frac{M}{\ker\sigma}=\dim N=(m|n),\]
and  by applying  Theorem \ref{th1} on the pair $(M,P)$ we get
\begin{eqnarray}\label{eq21}
\dim\mathcal{M}(N,L)&=&\dim \ker\sigma \leq \dim [M,L]=\dim [M,P]\nonumber\\
&\leq& \frac{1}{2}\big{(}(m+n)^2+(n-m)\big{)}+(m+n)\dim(L/N).
\end{eqnarray}
One may consider the following two cases:\\

{\bf Case I.}  $\ker\sigma \subsetneqq Z(M,L)$.\\
Then $\dim (M/Z(M,P))\lneqq \dim(M/\ker\sigma)=(m|n)$, and we have two cases again:

{\it Subcase I-1.} If $\dim (M/Z(M,P))\leq(m|n-1)$, then Theorem \ref{th1} and equality (\ref{eq2}) imply that
\begin{eqnarray*}
&&\frac{1}{2}\big{(}(m+n)^2+(n-m)\big{)}+(m+n)\dim(L/N)-1 \leq \dim [M,P]\\
&&\leq \frac{1}{2}\big{(}(m+n-1)^2+(n-1-m)\big{)}+(m+n-1)\dim(L/N).
\end{eqnarray*}
This  yields  $\dim L\leq 1$, and in both cases $\dim L=(1|0)$ and $(0|1)$, $L$ is abelian which is a contradiction.

{\it Subcase I-2.} If $\dim (M/Z(M,P))\leq(m-1|n)$, then by a similar manner one can show that $\dim L\leq 2$.
If $\dim L=(2|0)$, then $L$ is a Lie algebra. Since it is non-abelian, by Lemma \ref{lem3}, $\dim\mathcal{M}(L)=0$.
Hence Lemma \ref{lem4} implies that $\dim L=3$, which is a contradiction.
If $\dim L=(0|2)$, then $L$ is abelian which is impossible. Thus $\dim L =(1|1)$. Let $L=L_0\oplus L_1$ where
$L_0=\langle x\rangle$ and $L_1=\langle y\rangle$.
Then Definition \ref{def0} implies that there exist only two non-abelian Lie superalgebras. The  only  non-zero multiplication of the first one
is $[x,y]=y$, but this Lie superalgebra is not nilpotent since $Z(L)=0$. In the second one, we must have $[y,y]=x$ as the only
 non-zero multiplication. In this case $L^2=Z(L)=\langle x\rangle$ and   $L\cong \mathcal{H}(0,1)$.
Thus by Lemma \ref{lem5}, we get $\dim\mathcal{M}(L)=0$. Now, if $\dim N=(1|0)$, then
 $N=Z(L)$ which is impossible since $[N,L]\neq 0$. Also the multiplication $[y,y]=x$ shows that $\dim N$ cannot be $(0|1)$. Hence  we must have $N=L$. But equality (\ref{eq2}) implies that
 $\dim \mathcal{M}(L)=\dim \mathcal{M}(L,L)=1$, which is impossible.
 Hence Case I leads to a contradiction.\\

{\bf Case II.}  $\ker\sigma= Z(M,L)$.\\
 By  (\ref{eq2}) and  (\ref{eq21}), we have  two cases:

 {\it Subcase II-1.}
 $\ker\sigma=Z(M,P)=[M,P]$. Then we have  $N\cong \frac{M}{Z(M,P)}\subseteq Z(\frac{P}{Z(M,P)})\cong Z(L)$, which is impossible.

 {\it Subcase II-2.} $\ker\sigma=Z(M,P)\subsetneqq[M,P]$ and hence
 \begin{eqnarray}\label{eq22}
\dim [M,P]= \frac{1}{2}\big{(}(m+n)^2+(n-m)\big{)}+\dim N\dim(L/N).
\end{eqnarray}
In this subcase, we show that $\dim Z(\frac{M}{Z(M,P)},\frac{P}{Z(M,P)})=\dim Z(N,L)=1$. As mentioned above, this completes the proof.
 Suppose on the contrary that $Z(\frac{M}{Z(M,P)},\frac{P}{Z(M,P)})$ has a basis containing at least two elements $x+Z(M,P)$ and $y+Z(M,P)$.
Considering the adjoint map $ad_x:P\to P^2$, we have $P/C_P(x)\cong [x,P]$, where $C_P(x)$ is the centralizer of $x$ in $P$.
Since $x+Z(M,P)$ is non-zero, $Z(M,P)\subsetneqq C_P(x)$ and hence
 \begin{eqnarray}\label{eq23}
\dim [x,P]=\dim (P/C_P(x))\lneqq\dim(P/Z(M,P))=\dim L.
\end{eqnarray}
We prove that $\dim [x,P]=\dim L-1$.
Since $x+[x,P]\in Z(\frac{M}{[x,P]},\frac{P}{[x,P]})- \frac{Z(M,P)}{[x,P]}$, we have
\[\dim\Big{(}\frac{P/[x,P]}{Z(\frac{M}{[x,P]},\frac{P}{[x,P]})}\Big{)}\lneqq\dim\Big{(}\frac{P/[x,P]}{\frac{Z(M,P)}{[x,P]}}\Big{)}=\dim L,\]
and since
\[\dim\Big{(}\frac{P/[x,P]}{Z(\frac{M}{[x,P]},\frac{P}{[x,P]})}\Big{)}-\dim\Big{(}\frac{M/[x,P]}{Z(\frac{M}{[x,P]},\frac{P}{[x,P]})}\Big{)}=\dim (P/M)=\dim (L/N),\]
we get
\[\dim\Big{(}\frac{M/[x,P]}{Z(\frac{M}{[x,P]},\frac{P}{[x,P]})}\Big{)}\lneqq\dim L -\dim (L/N)=\dim N=(m|n).\]
Now if $\dim\Big{(}\frac{M/[x,P]}{Z(\frac{M}{[x,P]},\frac{P}{[x,P]})}\Big{)}\leq (m|n-1)$, then by applying Theorem \ref{th1} for the pair
$(\frac{M}{[x,P]},\frac{P}{[x,P]})$, and also using equality (\ref{eq22})
and a similar manner to subcase I-1, we get $\dim [x,P]\geq \dim L$, which contradicts  (\ref{eq23}).\\
But if  $\dim\Big{(}\frac{M/[x,P]}{Z(\frac{M}{[x,P]},\frac{P}{[x,P]})}\Big{)}\leq (m-1|n)$ then by a similar computation
we get $\dim [x,P]\geq \dim L-1$,
and using  (\ref{eq23}) we have
$\dim [x,P]=\dim(\frac{P}{C_P(x)})=\dim L-1$. Therefore,
 \begin{eqnarray}\label{eq24}
\dim\big{(}\frac{C_P(x)}{Z(M,P)}\big{)}&=&\dim\big{(}\frac{P}{Z(M,P)}\big{)}-\dim\big{(}\frac{P}{C_P(x)}\big{)}\nonumber\\
&=&\dim L-(\dim L-1)=1.
\end{eqnarray}
Since $x+Z(M,P)\in Z(\frac{M}{Z(M,P)},\frac{P}{Z(M,P)})$, then  $[M,P]\subseteq C_P(x)$, and since in this subcase we have $Z(M,P)\subsetneqq [M,P]$,
thus
 \begin{eqnarray*}
\dim L-1&=&\dim\big{(}\frac{P}{C_P(x)}\big{)}\leq \dim\big{(}\frac{P}{[M,P]}\big{)}\\
&=& \dim\Big{(}\frac{P/Z(M,P)}{[\frac{M}{Z(M,P)},\frac{P}{Z(M,P)}]}\Big{)}=\dim\big{(}\frac{L}{[N,L]}\big{)}\\
&=&\dim L -1,
\end{eqnarray*}
which implies that $[M,P]= C_P(x)$. Since $x+Z(M,P)$ is arbitrary, we also have $[M,P]= C_P(y)$ and hence $y\in C_P(x)$.
But this contradicts  (\ref{eq24}). Therefore in this subcase, we must have $\dim Z(\frac{M}{Z(M,P)},\frac{P}{Z(M,P)})=\dim Z(N,L)=1$.
\end{proof}

The following corollary generalizes
Lemma \ref{lem4} and also  shows that there is no finite dimensional  nilpotent Lie superalgebra with non-trivial odd part
satisfying equality (\ref{eq25}) below (see also \cite[Proposition 4.8]{miao}).

\begin{corollary}\label{coro6}
Let $L$ be a nilpotent Lie superalgebra of dimension $(m|n)$. Then
\begin{eqnarray}\label{eq25}
\dim \mathcal{M}(L)= \frac{1}{2}\big{(}(m+n)^2+(n-m)\big{)}-1
\end{eqnarray}
 if and only if $L\cong\mathcal{H}(1,0)=H(1)$.
\end{corollary}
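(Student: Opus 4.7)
The plan is to specialize Theorem \ref{th5} to the pair $(L,L)$, thereby reducing the problem to a classification question for Heisenberg Lie superalgebras, and then finish with a finite case analysis using Lemmas \ref{lem5} and \ref{lem51}.

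For the ``if'' direction, the computation is direct: for $L=\mathcal{H}(1,0)=H(1)$ we have $\dim L=(3|0)$, so $\tfrac{1}{2}((m+n)^2+(n-m))-1=\tfrac{1}{2}(9-3)-1=2$, which matches $\dim\mathcal{M}(\mathcal{H}(1,0))=2$ from Lemma \ref{lem5}.

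For the ``only if'' direction, suppose (\ref{eq25}) holds. Taking $N=L$ in Theorem \ref{th5} (so that $\dim(L/N)=0$ and the equation in the theorem becomes exactly (\ref{eq25})), I conclude that $L$ is non-abelian and either $L^{2}=[L,L]=0$ or $\dim L^{2}=1$ with $L^{2}=Z(L)$. The first alternative would force $L$ to be abelian, and Lemma \ref{lem31} would then give $\dim\mathcal{M}(L)=\tfrac{1}{2}((m+n)^2+(n-m))$, contradicting (\ref{eq25}). Hence $L$ is Heisenberg in the sense recalled before Lemma \ref{lem5}, and so $L$ is isomorphic either to some $\mathcal{H}(m',n')$ with $m'+n'\geq 1$ or to some $\mathcal{H}(n')$ with $n'\geq 1$.

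It remains to determine which Heisenberg Lie superalgebras satisfy (\ref{eq25}). For $L=\mathcal{H}(m',n')$ of dimension $(2m'+1\,|\,n')$, substituting $m=2m'+1$, $n=n'$ into the right-hand side of (\ref{eq25}) and equating with the formula in Lemma \ref{lem5} leads, after a short simplification, to $n'=-2m'$ in the range $m'+n'\geq 2$, which has no admissible solution; the two boundary cases $(m',n')=(1,0)$ and $(0,1)$ must then be checked by hand, and only $(m',n')=(1,0)$ works (giving exactly $L\cong H(1)$). For $L=\mathcal{H}(n')$ of dimension $(n'\,|\,n'+1)$, the right-hand side of (\ref{eq25}) evaluates to $2(n')^{2}+2n'$, while Lemma \ref{lem51} gives $2(n')^{2}-1$ for $n'\geq 2$ and $2$ for $n'=1$; neither matches for any $n'\geq 1$.

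The main obstacle is not conceptual but arithmetic: the passage from Theorem \ref{th5} to Heisenberg superalgebras is immediate, but the case analysis must be carried out with some care, in particular handling the three separate branches of the formula in Lemma \ref{lem5} and the two branches of Lemma \ref{lem51}, and ruling out the low-dimensional boundary cases by hand. Once that bookkeeping is done, $\mathcal{H}(1,0)=H(1)$ emerges as the unique solution, which simultaneously recovers Lemma \ref{lem4} in the purely even setting and shows that no finite dimensional nilpotent Lie superalgebra with non-trivial odd part satisfies (\ref{eq25}).
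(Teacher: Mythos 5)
Your proposal is correct and follows essentially the same route as the paper: specialize Theorem \ref{th5} to the pair $(L,L)$ to conclude that $L$ is Heisenberg, then split according to the parity of the one-dimensional center and compare (\ref{eq25}) with the formulas in Lemmas \ref{lem5} and \ref{lem51}. Your arithmetic (the reduction to $n'=-2m'$ in the even-center case and to $2n'^2+2n'$ versus $2n'^2-1$ in the odd-center case, plus the boundary checks) is a correct, more explicit version of the case analysis the paper leaves implicit.
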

\begin{proof}
If $L\cong \mathcal{H}(1,0)$,  then Lemma \ref{lem5}  implies that
 $\dim\mathcal{M}(\mathcal{H}(1,0))=2$. Conversely,
if equality (\ref{eq25}) holds, then  Theorem \ref{th5}  implies  that $L^2=Z(L)$ is one-dimensional.
If $\dim Z(L)=(1|0)$, then $L$ is the  Heisenberg Lie superalgebra $\mathcal{H}(\frac{m-1}{2},n)$ with even center.
Using  Lemma \ref{lem5},
we must have $L\cong \mathcal{H}(1,0)$. If $\dim Z(L)=(0|1)$, then
$L$ is the  Heisenberg Lie superalgebra $\mathcal{H}(\frac{m+n-1}{2})$ with odd center.  In this case, equality (\ref{eq25}) and
  Lemma \ref{lem51} lead to a contradiction.
\end{proof}


Hesam Safa \\
Department of Mathematics, Faculty of Basic Sciences, University of Bojnord, Bojnord, Iran.\\
E-mail address: hesam.safa@gmail.com, \ \ \  h.safa@ub.ac.ir \\

\end{document}